\newtheorem{theorem}{Theorem}[section]
\theoremstyle{plain}
\newtheorem{lemma}[theorem]{Lemma}
\newtheorem{remark}{Remark}
\numberwithin{equation}{section}
\begin{document}
\title[Nonlinear
Schr\"{o}dinger Equations]{A Note on Nonlinear
	Schr\"{o}dinger Equations: Unveiling the Relation Between Spectral Gaps and the Nonlinearity Behavior}
\author{Mayra Soares}
\address{Departamento de Matem\'{a}tica, UNB, 70910-900 Bras\'{\i}lia, Brazil.}
\email{ssc\_mayra@hotmail.com}
\author{Liliane A. Maia}
\address{Departamento de Matem\'{a}tica, UNB, 70910-900 Bras\'{\i}lia, Brazil.}
\email{lilimaia@unb.br}
\thanks{FAPDF 	00193.00001738/2018-75
and CNPq/PQ 308173/2014-7}
\date{\today}

\begin{abstract}
	We establish the existence of a nontrivial weak solution to strongly indefinite asymptotically linear and superlinear Schr\"odinger equations. The novelty is to identify the essential relation between the spectrum of the operator and the behavior of the nonlinear term, in order to weaken the necessary assumptions to obtain a linking structure to the problem, for instance to allow zero  being in the spectrum or the nonlinearity being sign-changing. Our main difficulty is to overcome the lack of monotonicity on the nonlinear term, as well, as the lack of compactness since the domain is unbounded. With this purpose, we require periodicity on $V$.
\medskip
\newline
\textsc{Key words:} Nonlinear Schr\"{o}dinger Equation; Superlinear and Asymptotically Linear; Variational Methods, Spectral Theory, Linking Structure.


\end{abstract}
\maketitle

\section{Introduction}

\label{sec:introduction}

\qquad In this paper we study the existence of a nontrivial weak solution to the following problem
\begin{equation}
\left\{
\begin{array}
[c]{l}%
-\Delta u+V(x)u=f(u),\\
u\in{H}^{1}(\mathbb{R}^{N}),
\end{array}
\right.  \tag{$P_V$}\label{prob}%
\end{equation}
for $N\geq3$, where $V$ is $1$-periodic
and $f$ is an asymptotically linear or superlinear nonlinearity. Our goal is to shed light on the role played by the interaction between gaps of the spectrum of the Schr\"odinger operator and the asymptotic behavior of $f(s)/s$ as $s$ goes to zero or to infinity in other to loosen the hypotheses usually required in the literature. Problem (\ref{prob}) arises in the study of stationary states of nonlinear
Schr\"{o}dinger or Klein-Gordon type equations and has been studied
extensively in order to reach the most general assumptions which enable to solve it.
 
\qquad We assume the following hypotheses of behavior on the nonlinearity:
\medskip\newline
$(f_1)\quad
f\in\mathcal{C}(\mathbb{R})$, $f(0)=0$, there exists $\displaystyle\lim_{s \to 0}\dfrac{f(s)}{s}=f'(0)$ and for $F(s):=\displaystyle\int_{0}^{s}f(t)\,dt$
\[
\dfrac{2F(s)}{s^2} \geq f'(0) \quad \text{for \ all\ } s\in \mathbb{R};
\]
$(f_2)$\quad There exist $a>f'(0)$ and $p\in [2,2^*)$ such that 
\[
\displaystyle\lim_{|s| \to +\infty}\dfrac{f(s)}{s}\geq a \quad \text{and} \quad \displaystyle\limsup_{|s|\to+\infty}\dfrac{f(s)}{s^{p-1}} < +\infty; 
\]

\begin{remark}
For the asymptotically linear case we have $f(s)/s \to a$ as $|s|\to +\infty$, hence for all $p\in [2,2^*)$, it satisfies the second limit in $(f_2)$. Thereby, for the asymptotically linear case hypothesis $(f_2)$ can be rewritten as
\medskip
\newline
$(f_2)'$\quad There exists $a>f'(0)$  such that
\[
\displaystyle\lim_{|s| \to +\infty}\dfrac{f(s)}{s}= a.
\]
Whereas, for the superlinear case, $f(s)/s \to +\infty$ as $|s|\to +\infty$, hence for all $a>f'(0)$ the first limit in $(f_2)$ is satisfied. Nevertheless, is necessary to require some $p \in (2,2^*)$ satisfying the second limit in $(f_2)$ to ensure $f$ has sub-critical growth. Thereby, for the superlinear case hypothesis $(f_2)$ can be rewritten as
\medskip
\newline
$(f_2)''$\quad There exists $p\in (2,2^*)$ such that 
\[
\displaystyle\lim_{|s| \to +\infty}\dfrac{f(s)}{s}=+\infty \quad \text{and} \quad \displaystyle\limsup_{|s|\to+\infty}\dfrac{f(s)}{s^{p-1}} < +\infty.
\]
Moreover, we would like to point out that throughout the arguments presented here, it is only essential to distinguish $(f_2)'$ from $(f_2)''$ when we are arguing about the boundedness of Cerami sequences for the functional associated to problem (\ref{prob}).
\end{remark}

\qquad Defining  
operator $A:= -\Delta + V(x)$,
as an operator of $L^2(\mathbb{R}^N)$ and denoting by $\sigma(A)$ the spectrum of $A$ and by $\sigma_{ess}(A)$ the essential spectrum of $A$, we assume the following spectral hypotheses on $V$: 
\medskip
\newline
$(V_1)$\quad There exists $(\sigma^-,\sigma^+)\subset \mathbb{R}\setminus\sigma(A)$, a spectral gap of $\sigma(A)$ such that ${\sigma^- < f'(0) < \sigma^+<a}$.
\medskip
\newline
$(V_2)$\quad $V\in L^\infty(\mathbb{R}^N)$ is a non constant $1$-periodic
function in $x_i$ for $i = 1, . . . ,N.$
\medskip
\begin{remark}\label{r2}
 It is important to highlight that the relation between $\sigma(A)$ and $f$ is established by the limits of $\dfrac{f(s)}{s}$ at the origin and at infinity, which must be in some spectral gap and after the same spectral gap, respectively. For instance, it is straightforward to verify that the asymptotically linear
model nonlinearity $f(s)=\frac{as^{3} -s }{1+s^{2}}$ satisfies hypotheses $(f_1)-(f_2)$, with $f'(0)=-1$. In this case, assumption $(V_1)$ requires $-1\notin \sigma(A)$, but it is not required $0\notin \sigma(A)$ as usual. In addition, note that $(V_1)$ requires $a > \sigma^+$, but not $a>0$ as usual, since $\sigma^+$ and even $a$ can be negative. Furthermore, for the superlinear case, the constant $a$ can be any real number satisfying $\sigma^+ <a$, since the first limit in $(f_2)$ is infinity.
\end{remark}
\begin{remark}If $V \in L^\infty(\mathbb{R}^N)$ is periodic, then $A$ is a self-adjoint operator with purely absolutely continuous spectrum $\sigma(A) = \sigma_{ess}(A)= \displaystyle\cup[a_i,b_i]$, hence $f'(0)\notin \sigma(A)$ means that $f'(0) \in (b_i,a_{i+1})$ for some $i$, a spectral gap of $A$, then we denote this gap by $(\sigma^-,\sigma^+)$ and hypothesis $(V_1)$ makes sense in view of hypothesis $(V_2)$.
\end{remark}

\qquad To the best of our knowledge, these assumptions on problem (\ref{prob}) generalize usual conditions in the literature for this type of problem. Indeed, given a periodic $V$,  we only require on $f$ a necessary relation with some gap in $\sigma(A)$. This type of assumption allows $f$ being even negative in some cases.

\qquad Setting $Q: \mathbb{R}\to\mathbb{R}$, given by
\[
Q(s) := f(s)s - 2F(s),
\]
our main result is stated as follows.
\medskip
\begin{theorem}\label{main}
	Assume that $(V_1)-(V_2)$ and $(f_1)-(f_2)$ hold true. If $(f_2)'$ holds with
	\medskip
	\newline
	$(f_3)$\quad $Q(s)\geq0$ and
	\[\lim_{|s| \to +\infty}Q(s)= +\infty;
	\]
	or if $(f_2)''$ holds with
	\medskip
	\newline
	$(f_4)$\quad There exist $c_0>0$ and $\theta>\min\{1,\dfrac{N}{2}(p-2)\}$ such that 
	\[Q(s)\geq c_0|s|^\theta, \; \forall \; s\in \mathbb{R};
	\]
	then problem \emph{(\ref{prob})} has a nontrivial weak solution.
\end{theorem}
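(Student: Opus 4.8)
The plan is to obtain the solution as a critical point of the energy functional
\[
I(u)=\frac12\int_{\mathbb{R}^N}\big(|\nabla u|^2+V(x)u^2\big)\,dx-\int_{\mathbb{R}^N}F(u)\,dx,\qquad u\in E:=H^1(\mathbb{R}^N),
\]
which is of class $\mathcal{C}^1$ by the subcritical growth in $(f_2)$. The first step is a spectral splitting adapted to the gap around $f'(0)$. Writing $B:=A-f'(0)$, hypothesis $(V_1)$ puts $0$ in a gap of $\sigma(B)$, so the spectral projections of $B$ decompose $E=E^-\oplus E^+$ with $B$ positive definite on $E^+$ and negative definite on $E^-$; equipping $E$ with the equivalent norm $\|u\|^2=\langle|B|u,u\rangle_{L^2}$ diagonalizes the quadratic part and yields
\[
I(u)=\tfrac12\|u^+\|^2-\tfrac12\|u^-\|^2-\int_{\mathbb{R}^N}G(u),\qquad G(s):=F(s)-\tfrac12 f'(0)s^2,
\]
where the shift by $f'(0)$ is exactly what makes $G\ge 0$, by $(f_1)$. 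Because $V$ is periodic the spectrum is purely absolutely continuous, so both $E^-$ and $E^+$ are infinite dimensional; the functional is strongly indefinite and must be handled in a weak-type topology, which is where periodicity $(V_2)$, and not merely ellipticity, is decisive.

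Next I would check the linking geometry. For $u\in E^+$ with $\|u\|$ small, $(f_1)$ and $(f_2)$ give $\int G(u)=o(\|u\|^2)$, whence $I\ge\alpha>0$ on a small sphere $S_\rho^+=\{u\in E^+:\|u\|=\rho\}$. For the descending directions I would exploit the strict inequality $\sigma^+<a$ from $(V_1)$: the positive part of $\sigma(B)$ begins at $\sigma^+-f'(0)$, while $G(s)\ge\tfrac12(a-f'(0)-\varepsilon)s^2-C_\varepsilon$ with $a-f'(0)>\sigma^+-f'(0)$, so one can select $e\in E^+$ whose spectral measure concentrates in $[\sigma^+,\sigma^++\delta)$ and makes $\langle Be,e\rangle$ arbitrarily close to $(\sigma^+-f'(0))\|e\|_{L^2}^2$; along $E^-\oplus\mathbb{R}_+e$ the nonlinear term then dominates the quadratic part, so $I\to-\infty$ and $\sup I\le 0$ on $\partial\mathcal{B}_R$ for the ``book'' $\mathcal{B}_R=\{u^-+te:u^-\in E^-,\ t\ge0,\ \|u^-+te\|\le R\}$. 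Since the absolutely continuous spectrum provides no genuine eigenfunction, this spectral concentration is the first delicate point. A generalized linking theorem of Kryszewski--Szulkin/Bartsch--Ding type then yields a Cerami sequence $(u_n)$ at a level $c\ge\alpha>0$.

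The crux, and the step I expect to be the main obstacle, is the boundedness of $(u_n)$; this is precisely where $(f_3)$ and $(f_4)$ act and where the distinction between $(f_2)'$ and $(f_2)''$ is felt. In both cases the identity $I(u_n)-\tfrac12 I'(u_n)u_n=\tfrac12\int Q(u_n)$ together with the Cerami property bounds $\int Q(u_n)$. Under $(f_3)$, supposing $\|u_n\|\to\infty$ and setting $v_n=u_n/\|u_n\|$, I would run a vanishing/non-vanishing dichotomy: in the non-vanishing case, translating by points of $\mathbb{Z}^N$ (allowed by $(V_2)$) produces a weak limit $\tilde v\ne0$, so $|u_n|\to+\infty$ on a set of positive measure where $Q(u_n)\to+\infty$, which contradicts the bound on $\int Q(u_n)$ through Fatou's lemma; the vanishing alternative is excluded by a standard Lions-type argument. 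Under $(f_4)$, the pointwise estimate $Q(s)\ge c_0|s|^\theta$ gives a uniform bound on $\|u_n\|_{L^\theta}$, which I would interpolate against the Sobolev control $\|u_n\|_{L^{2^*}}\lesssim\|u_n\|$ to estimate $\|u_n\|_{L^p}$ inside the identity $\|u_n\|^2=\int g(u_n)(u_n^+-u_n^-)+o(1)$, where $g=G'$; the condition $\theta>\min\{1,\tfrac{N}{2}(p-2)\}$ is exactly the exponent balance that forces $\|u_n\|$ to stay bounded. Reconciling these two regimes and the interpolation bookkeeping is where the real work lies.

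Finally I would regain compactness from periodicity. A bounded Cerami sequence at the positive level $c$ cannot vanish, since vanishing would give $\int G(u_n)\to0$ and the Cerami identities would then force $\|u_n\|\to0$ and $c=0$; hence a Lions-type argument provides $y_n\in\mathbb{Z}^N$ with $\liminf_n\int_{B_1(0)}|u_n(\cdot+y_n)|^2>0$. By the $\mathbb{Z}^N$-invariance of $I$ the translates $w_n:=u_n(\cdot+y_n)$ form another bounded Cerami sequence, and their weak limit $w\ne0$ is a critical point of $I$, that is, a nontrivial weak solution of (\ref{prob}); the passage to the limit in the nonlinear term uses the subcritical growth in $(f_2)$ and the local compactness of the Sobolev embedding.
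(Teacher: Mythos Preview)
Your outline matches the paper's strategy almost step for step: the shift by $f'(0)$ to place $0$ in a spectral gap, the choice of $e\in E^+$ via spectral concentration near $\sigma^+$, the Li--Szulkin-type linking (the paper uses \cite{LS} rather than Kryszewski--Szulkin, but the structure is the same) to produce a $(C)_c$ sequence, the separate boundedness arguments under $(f_3)$ and $(f_4)$, and the final $\mathbb{Z}^N$-translation to recover a nontrivial weak limit. The superlinear boundedness via the $L^\theta$ bound and interpolation, and the concluding compactness step, are exactly as in the paper.

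The one place where your sketch is too thin is the vanishing branch in the asymptotically linear boundedness argument. Saying it is ``excluded by a standard Lions-type argument'' is not quite right: Lions' lemma only tells you that vanishing of $v_n=u_n/\|u_n\|$ forces $\|v_n\|_{L^q}\to 0$ for $q\in(2,2^*)$; it does not by itself rule vanishing out. The paper derives a contradiction from vanishing by a measure-theoretic argument specific to $(f_3)$: testing $I'(u_n)$ against $(v_n^+-v_n^-)/\|u_n\|$ gives
\[
\int_{\mathbb{R}^N} \frac{f_0(u_n)}{u_n}\big[(v_n^+)^2-(v_n^-)^2\big]\,dx\to 1;
\]
on $\{|u_n|<\delta\}$ this integral is uniformly small by $(f_1)_0$, so the complement $\{|u_n|\ge\delta\}$ carries a contribution bounded away from zero. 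Since $f_0(s)/s$ is bounded (this is where $(f_2)'$ enters), H\"older and $\|v_n\|_{L^q}\to 0$ then force $|\{|u_n|\ge\delta\}|\to+\infty$. Splitting this set into $\{|u_n|>M\}$, whose measure stays bounded because $Q>1$ there and $\int Q(u_n)$ is bounded, and $\{\delta\le|u_n|\le M\}$, the latter must have measure tending to infinity; but $Q$ is continuous and strictly positive on $[\delta,M]$, so $\int Q(u_n)\to+\infty$, contradicting the Cerami bound. This level-set argument is the only genuinely non-routine point your proposal glosses over.
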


\begin{remark}
	Hypotheses $(f_3)$ and $(f_4)$ are only required in order to guarantee the boundedness of Cerami sequences for $I$ in asymptotically linear and superlinear cases, respectively.  
\end{remark}

\medskip

\qquad Periodic and asymptotically periodic nonlinear Schr\"odinger equations have been extensively studied in the last 20 years, hence there exists a vast bibliography on the subject. However, under the purpose of focusing on our contribution we would like to mention those works more closely related with ours. Once we have been studying indefinite problems in $\mathbb{R}^N$, we were first motivated by G. Li and A. Szulkin in \cite{LS}, where they prove an abstract linking theorem involving a weaker topology. Their approach allows to obtain a Cerami sequence for the associated functional without assuming any compactness condition. On the other hand, they strongly rely on a auxiliary problem, which requires a monotonicity condition on the nonlinearity, in order to get a nontrivial critical point. 

\qquad Here, we are able to apply the abstract result by G. Li and A. Szulkin in \cite{LS}, but to get a nontrivial critical point, we exploit either periodicity
and spectral properties, avoiding any monotonicity assumption. Similar ideas were used by Ding Y. and C. Lee in \cite{DL}, but under different hypotheses on the nonlinear term, other related works can be found in \cite{D}. Nevertheless, all these papers require zero lying on a gap of the spectrum, which is pretty common in the literature, we refer to \cite{CLX,MS2, M, Pa} and references therein. In this sense, our improvement is to realize this assumption is not necessary, generalizing for many problems. In fact, it is only necessary to exist a gap in the spectrum and the key for solving this type of problem is to work with nonlinearities which are related to the mentioned spectral gap in a suitable way, see hypothesis $(V_1)$. Furthermore, to the best of our knowledge, all works in this scenario assume either $0\notin \sigma(A)$ or even zero as a boundary point in a gap of $\sigma(A)$, see for instance \cite{BD}, but never zero as a interior point of $\sigma(A)$. However, we understand that zero is not the essence of this issue, since we can treat problems with zero as a interior point of $\sigma(A)$, by working with a translated problem.

\qquad Indeed, appropriating the ideas presented by L. Jeanjean in \cite{J}, we observe that ${u \in H^1(\mathbb{R}^N)}$ is a solution to (\ref{prob}) if and only if $u$ is a solution to
\begin{equation}
\left\{
\begin{array}
[c]{l}%
-\Delta u+V_0(x)u=f_0(u),\\
u\in{H}^{1}(\mathbb{R}^{N}),
\end{array}
\right.  \tag{$P_{V_0}$}\label{prob_0}%
\end{equation}
where 
\begin{eqnarray}\label{def}
\quad \quad \quad \quad V_0(x) &:=& V(x)-f'(0)\nonumber\\
\quad \quad \quad \quad f_0(s) &:=& f(s) - f'(0)s, \quad \text{for \ all\; } s \in \mathbb{R}.
\end{eqnarray}
\medskip
\newline
Therefore, we are going to find a nontrivial weak solution to problem (\ref{prob_0}) instead of (\ref{prob}). In view of $(f_1)-(f_2)$ and $(V_1)-(V_2)$ we obtain theirs correspondent versions for problem (\ref{prob_0}), namely, $(f_1)_0-(f_2)_0$ and $(V_1)_0-(V_2)_0$ on $f_0$ and $V_0$, respectively, which are more easygoing to treat than $(f_1)-(f_2)$ and $(V_1)-(V_2)$ on $f$ and $V$. They are stated as follows:
\medskip\newline
$(f_1)_0\quad
f_0\in\mathcal{C}(\mathbb{R})$, $f_0(0)=0$, there exists $\displaystyle\lim_{|s| \to 0}\dfrac{f_0(s)}{s}= 0$ and for $F_0(s):=\displaystyle\int_{0}^{s}f_0(t)\,dt$
\[
{F_0(s)} \geq 0 \quad \text{for \ all\ } s\in \mathbb{R};
\]
$(f_2)_0$\quad For $a_0:= a - f'(0)$ and for some $p\in[2,2^*)$ it holds
\[
\displaystyle\lim_{|s| \to +\infty}\dfrac{f_0(s)}{s}\geq a_0\quad \text{and} \quad \limsup_{|s|\to+\infty}\dfrac{f_0(s)}{s^{p-1}} <+\infty.
\]

\qquad Defining $A_0 := A - f'(0) =  -\Delta + V_0(x)$ and denoting the spectrum of $A_0$ by $\sigma(A_0)$,
\bigskip
\newline
$(V_1)_0$\quad There exists $\big(\sigma^- - f'(0), \sigma^+-f'(0)\big)\subset \mathbb{R}\setminus \sigma(A_0)$ a spectral gap of $\sigma(A_0)$ such that
\[
a_0= a - f'(0) > \sigma^+ - f'(0)>0>\sigma^--f'(0);
\]
$(V_2)_0$\quad $V_0\in L^\infty(\mathbb{R}^N)$ is a non constant $1$-periodic
function in $x_i$ for $i=1,...,N$. 
\bigskip

\qquad Moreover, setting $Q_0(s) := f_0(s)s - 2F_0(s) = Q(s)$ for $s\in \mathbb{R}$, we observe that $(f_3)$ and $(f_4)$ are the same on $f$ or $f_0$.
\medskip

\qquad Theorem \ref{main} is proved by applying variational methods. Let us briefly
highlight some technical details. Setting the Hilbert space
%
%
$E:=\Big(H^1_0(\mathbb{R}^N), ||\cdot||\Big)$, where $||\cdot||$ is the norm induced by operator $A_0$ and considering $\{\mathcal{E}(\lambda): \lambda \in \mathbb{R}\}$ as the spectral family of operator $A_0$, we set $E^+ \subset E$ as the subspace given by $E^+ := \Big(I - \mathcal{E}(0)\Big)E$ where $A_0$ is positive definite and ${E^- := \mathcal{E}(0)E}$, the subspace where $A_0$ is negative definite, hence ${E = E^+ \oplus E^-}$. In view of $(V_1)_0$ we have that ${\big(\sigma^- - f'(0), \sigma^+-f'(0)\big)}$ is a spectral gap of $A_0$, with ${\sigma^- - f'(0)<0< \sigma^+-f'(0)}$, then by the spectral family definition, one has
\begin{equation}\label{e00}
\int_{\mathbb{R}^N}\Big(|\nabla u^+(x)|^2+ V_0(x)(u^+(x))^2\Big)\;dx \geq \Big(\sigma^+- f'(0)\Big)\int_{\mathbb{R}^N}(u^+(x))^2\;dx,
\end{equation}
for all $u^+ \in E^+$ and
\begin{equation}\label{e000}
-\int_{\mathbb{R}^N}\Big(|\nabla u^-(x)|^2+ V_0(x)(u^-(x))^2\Big)\;dx \geq \Big(f'(0) - \sigma^-\Big) \int_{\mathbb{R}^N}(u^-(x))^2\;dx,
\end{equation}
for all $u^- \in E^-.$  
Hence the following inner product is well defined and we are focused on looking for a nontrivial weak solution to (\ref{prob_0}) on the Hilbert space $E$ endowed with this suitable inner product

\begin{equation}\label{IP}
\big( u,v \big) = \left\{
\begin{array}{lllll}
\quad \displaystyle\int_{\mathbb{R}^N}\Big(\nabla u(x) \nabla v(x) + V_0(x)u(x)v(x)\Big)dx = (A_0u,v)_{L^2(\mathbb{R}^N)}  \quad \quad \ \quad   \quad \text{if} \ \ u, v \in E^+, \\
\\
\quad -\displaystyle\int_{\mathbb{R}^N}\Big(\nabla u(x) \nabla v(x) + V_0(x)u(x)v(x)\Big)dx = -(A_0u,v)_{L^2(\mathbb{R}^N)} \  \quad \  \quad\text{if} \ \ u,v \in E^-,\\
\\
\quad 0  \quad \quad  \quad \quad \quad \quad \quad  \quad   \quad \quad \quad \quad \quad \quad \quad \quad \quad \quad \quad  \quad \quad \quad \quad \; \text{if} \ \ u \in E^i,\; \ v \in E^j, i\not=j,
\end{array}
\right.
\end{equation}
and the corresponding norm $||u||^2 := (u,u)$ for all $u \in E$, which is equivalent to the standard norm in $H^1(\mathbb{R}^N)$.

\begin{remark}\label{r5}
Observe that hypothesis $(V_1)_0$ is sufficient to guarantee equivalence of the concerned norms, in view of (\ref{e00})-(\ref{e000}) and since $0 \notin \sigma(A_0)$. In fact, it could be weakened asking for $f'(0)\notin \sigma_{ess}(A)$, namely, $0\notin \sigma_{ess}(A_0)$ instead of $0 \notin \sigma(A_0)$, for details see \cite{MS}. However, since $V$ is periodic, we have $\sigma(A) = \sigma_{ess}(A)$ and these hypotheses are the same.
\end{remark}
\qquad Following the variational approach, we associate to (\ref{prob_0}) the functional $I:E\to \mathbb{R}$ given by
\begin{equation}\label{I}
I(u) = \dfrac{1}{2}\Big(||u^+||^2 -||u^-||^2 \Big) - \int_{\mathbb{R}^N}F_0(u(x))\,dx,
\end{equation}
which is strongly indefinite and belongs to $C^1(E,\mathbb{R})$ in view of the previous hypotheses. Our goal is to apply an abstract linking result due to G. Li and A. Szulkin developed in \cite{LS}, which is going to provide us a Cerami sequence for functional $I$ on a positive level $c \in \mathbb{R}$. 
We recall that $(u_n)\subset E$ is a Cerami sequence for $I$ if
\[
\displaystyle\sup_n|I(u_n)| <+\infty \quad \text{and} \quad ||I'(u_n)||_{E'}(1 + ||u_n||)\to 0 \quad \text{as} \quad n\to+\infty.
\]
Furthermore, $(u_n)$ is called a $(C)_c$ sequence, or a Cerami sequence on the level $c$, if besides that it satisfies $I(u_n)\to c$ as $n\to +\infty$. 
The abstract linking theorem mentioned above is stated now:
\begin{theorem}[Theorem 2.1 \cite{LS}]\label{AR}
Let $E = E^+ \oplus E^-$ be a separable Hilbert space with $E^-$ orthogonal to $E^+$. Suppose
\medskip
\newline	
(i) $I(u) = \dfrac{1}{2}\Big(||u^+||^2 -||u^-||^2\Big) - \varphi(u)$, where $\varphi \in C^1(E,\mathbb{R})$ is bounded
below, weakly sequentially lower semi-continuous and $\varphi'$ is weakly sequentially continuous;
\medskip
\newline
(ii) There exist $z_0 \in E^+ \slash \{0\}$, $\beta>0$ and $R>r>0$ such that $I|_N\geq \beta$ and  $I|_{\partial M}\leq 0$.
\medskip
\newline
Then there exists a $(C)_c$-sequence for $I$, where 
\[
M:=\{u= u^-+tz_0: u^- \in E^-, \; ||u||\leq R, \; t\geq0\}, \; N:=\partial B_r\cap E^+,
\]
\[
\Gamma:=\left\{
\begin{array}{ll}
h \in C(M\times[0,1],E):h \, is \, admissible, \; h(u,0)=u\\
 and \;
I(h(u,s))\leq\max\{I(u),-1\}, \; \forall \; s \in[0,1]
\end{array}
\right\}
\]
and
\[
c := \inf_{h \in \Gamma}\sup_{u \in M}I(h(u,1)).
\]
\end{theorem}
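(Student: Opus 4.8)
The plan is to reconstruct the generalized-linking deformation argument of Kryszewski--Szulkin, adapted to the Cerami setting as in \cite{LS}. First I would verify that the minimax value $c$ is well defined with $c\in[\beta,+\infty)$. Finiteness is immediate: the constant homotopy $h(u,s):=u$ is admissible and lies in $\Gamma$, and on $M$ one has $u^+=tz_0$ with $\|u^+\|\leq R$, so $I(u)\leq\tfrac12\|u^+\|^2-\inf_E\varphi\leq\tfrac12 R^2-\inf_E\varphi$, whence $c\leq\sup_M I<+\infty$. The lower bound $c\geq\beta$ is the linking step: I would show every $h\in\Gamma$ satisfies $h(M\times\{1\})\cap N\neq\emptyset$. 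Since $I|_{\partial M}\leq 0$, the constraint $I(h(u,s))\leq\max\{I(u),-1\}$ gives $I(h(u,s))\leq 0$ for $u\in\partial M$, so the whole homotopy avoids $N$ (where $I\geq\beta>0$). Writing $N$ as the zero set of $\Phi(u):=\big(P^-u,\ \|P^+u\|-r\big)$ on the finite-``codimension'' space $E^-\oplus\mathbb{R}z_0$, and using that $h$ is admissible — so its $E^+$-component is a $\mathcal{T}_w$-continuous (compact-type) perturbation of a scaling of the identity — the Kryszewski--Szulkin topological degree of $u\mapsto\Phi(h(u,\cdot))$ on $M$ is well defined and homotopy-invariant; the admissible homotopy connecting $h(\cdot,1)$ to $h(\cdot,0)=\mathrm{id}$ reduces it to the degree of $\Phi$ itself, which is $\pm1$ because $\Phi$ has the single nondegenerate zero $u^-=0$, $t\|z_0\|=r$ in $M$. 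A nonzero degree forces $h(u,1)\in N$ for some $u$, and since $I|_N\geq\beta$ this yields $\sup_M I(h(\cdot,1))\geq\beta$ for all $h\in\Gamma$, i.e.\ $c\geq\beta$.

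The core difficulty is that $I$ is strongly indefinite ($\dim E^-=\infty$), so the classical deformation lemma is unavailable: the negative pseudo-gradient flow is not compact and norm-topology degree theory fails. I would therefore work with the weaker topology $\mathcal{T}_w$ on $E$ generated, via an orthonormal basis $(e_k)$ of $E^-$, by $\|u\|_w:=\max\big\{\|u^+\|,\ \sum_{k}2^{-k}|(u^-,e_k)|\big\}$, which coincides with the weak topology on bounded sets. The hypotheses in (i) are exactly what make this machinery run: weak sequential lower semicontinuity of $\varphi$ keeps the relevant sublevel regions $\mathcal{T}_w$-closed, while weak sequential continuity of $\varphi'$ guarantees that $I'$ is weakly sequentially continuous and that a pseudo-gradient field for $I$ may be chosen locally Lipschitz and $\mathcal{T}_w$-continuous, so that its flow is admissible in the sense defining $\Gamma$.

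I would then argue by contradiction. If no $(C)_c$-sequence exists, there are $\alpha,\delta>0$ with $(1+\|u\|)\,\|I'(u)\|_{E'}\geq\alpha$ for all $u$ in the strip $|I(u)-c|\leq\delta$. Exploiting this Cerami weight I would construct a $\mathcal{T}_w$-continuous vector field $V$ on the strip with $\|V(u)\|\leq 1+\|u\|$ and $\langle I'(u),V(u)\rangle\geq\tfrac12\|I'(u)\|_{E'}(1+\|u\|)\geq\tfrac{\alpha}{2}$. The bound $\|V\|\leq 1+\|u\|$ yields, by Gronwall, global existence of the flow $\tfrac{d}{ds}\eta=-V(\eta)$ with at most exponential growth of $\|\eta\|$, while $\tfrac{d}{ds}I(\eta)\leq-\tfrac{\alpha}{2}$ drives $I$ down by a definite amount in finite time. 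The resulting $\eta$ is admissible, decreases $I$, and satisfies $I(\eta(u,s))\leq\max\{I(u),-1\}$, hence stabilizes $\Gamma$ under concatenation. Choosing $h\in\Gamma$ with $\sup_M I(h(\cdot,1))\leq c+\varepsilon$ for small $\varepsilon\in(0,\delta)$ and setting $\tilde h:=\eta\circ h$ (suitably reparametrized and glued to $h$), I would obtain $\tilde h\in\Gamma$ with $\sup_M I(\tilde h(\cdot,1))\leq c-\varepsilon$, contradicting $c=\inf_{h\in\Gamma}\sup_M I(h(\cdot,1))$. This contradiction produces the desired $(C)_c$-sequence.

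I expect the main obstacle to be the simultaneous handling of the strong indefiniteness and of the Cerami (rather than Palais--Smale) condition. The two delicate points are: first, verifying that the deformation flow is genuinely admissible and $\mathcal{T}_w$-continuous — this is where weak sequential continuity of $\varphi'$ is indispensable and where the infinite-dimensionality of $E^-$ would otherwise destroy both the deformation and the degree arguments; and second, the weight $(1+\|u\|)$, which is essential so that a field obeying $\|V\|\leq 1+\|u\|$ still lowers $I$ at a uniform rate on unbounded regions, preventing the flow from escaping to infinity before $I$ drops. The linking-degree computation of the first step and the admissibility of the flow in the last step are precisely the places requiring the full Kryszewski--Szulkin apparatus; the remaining estimates are routine.
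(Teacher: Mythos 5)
This theorem is imported verbatim from Li--Szulkin \cite{LS}; the paper offers no proof of it, so the only meaningful comparison is with the original argument there, and your reconstruction follows it faithfully: the Kryszewski--Szulkin degree for admissible maps in the topology $\mathcal{T}_w$ to establish the intersection property $h(M\times\{1\})\cap N\neq\emptyset$ (hence $c\geq\beta$), followed by the contradiction argument with a $\mathcal{T}_w$-continuous pseudo-gradient field satisfying $\|V(u)\|\leq 1+\|u\|$, whose weighted bound is exactly what converts the Palais--Smale deformation into a Cerami one. Your proposal is correct and takes essentially the same route as the cited source.
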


\qquad Next section is dedicated to prove that under our hypotheses, functional $I$ satisfies $(i)-(ii)$ in Theorem \ref{AR}, providing the desired $(C)_c$ sequence. Posteriorly, having this sequence in hand, in virtue of the assumptions on $f$ we are going to be able to show it is bounded. Finally, by means of an indirect argument, last section of this paper sets up a nontrivial critical point to $I$, which is the weak limit of a suitable Cerami subsequence. Therefore, we obtain a nontrivial weak solution to (\ref{prob_0}).

\section{Establishing the Linking Structure}

\medskip
\qquad Henceforth, we aim to check functional $I$ defined in (\ref{I}) satisfies assumptions $(i)-(ii)$ in Theorem \ref{AR}. Next lemma verifies $(i)$.

\begin{lemma}\label{l1}
Under assumptions $(f_1)_0-(f_2)_0$ on $f_0$ and $(V_1)_0$ on $V_0$, functional $I$ satisfies $(i)$ in Theorem \ref{AR}. 	
\end{lemma}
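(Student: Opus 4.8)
The plan is to set $\varphi(u) := \int_{\mathbb{R}^N} F_0(u(x))\,dx$ and verify in turn the four demands in $(i)$: that $\varphi \in C^1(E,\mathbb{R})$, that it is bounded below, that it is weakly sequentially lower semi-continuous, and that $\varphi'$ is weakly sequentially continuous. Every estimate will rest on the subcritical growth of $f_0$ read off from $(f_1)_0$ and $(f_2)_0$: the limit $f_0(s)/s \to 0$ as $s\to 0$ controls small $|s|$, while $\limsup_{|s|\to\infty} f_0(s)/s^{p-1} < +\infty$ controls large $|s|$, so that for each $\varepsilon>0$ there is $C_\varepsilon>0$ with
\[
|f_0(s)| \leq \varepsilon|s| + C_\varepsilon|s|^{p-1}, \qquad |F_0(s)| \leq \tfrac{\varepsilon}{2}|s|^2 + \tfrac{C_\varepsilon}{p}|s|^p, \quad s\in\mathbb{R}.
\]
Since $p\in[2,2^*)$, the embeddings $E=H^1(\mathbb{R}^N)\hookrightarrow L^2\cap L^p$ make $\varphi$ finite on $E$, and the standard theory of Nemytskii operators under subcritical growth yields $\varphi\in C^1(E,\mathbb{R})$ with $\varphi'(u)v=\int_{\mathbb{R}^N} f_0(u)v\,dx$.

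Boundedness below is immediate: the inequality $F_0(s)\geq 0$ from $(f_1)_0$ gives $\varphi(u)\geq 0$ for all $u\in E$. For weak sequential lower semi-continuity I would take $u_n\rightharpoonup u$ in $E$, pass to a subsequence realising $\liminf_n\varphi(u_n)$, and use that weak $H^1$ convergence gives $u_n\to u$ in $L^p_{\mathrm{loc}}$ and hence a.e. along a further subsequence. Then $F_0\geq 0$ together with continuity of $F_0$ lets Fatou's lemma conclude $\int_{\mathbb{R}^N} F_0(u)\leq \liminf_n\int_{\mathbb{R}^N} F_0(u_n)$, that is, $\varphi(u)\leq\liminf_n\varphi(u_n)$.

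The delicate point, which I expect to be the main obstacle, is weak sequential continuity of $\varphi'$, because the unbounded domain $\mathbb{R}^N$ provides no global compact embedding. My plan is to fix $v\in E$ and split $\int_{\mathbb{R}^N} f_0(u_n)v$ over a ball $B_R$ and its complement. On $B_R$ the Rellich--Kondrachov embedding $H^1(B_R)\hookrightarrow L^p(B_R)$ is compact, so $u_n\to u$ in $L^p(B_R)$; continuity of the Nemytskii operator then gives $f_0(u_n)\to f_0(u)$ in $L^{p'}(B_R)$ with $p'=p/(p-1)$, and pairing with $v\in L^p(B_R)$ forces $\int_{B_R} f_0(u_n)v\to\int_{B_R} f_0(u)v$. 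On $\mathbb{R}^N\setminus B_R$, the growth bound and H\"older give a tail estimate of the form $\|u_n\|_2\,\|v\|_{L^2(|x|>R)}+\|u_n\|_p^{p-1}\,\|v\|_{L^p(|x|>R)}$ up to constants; since $(u_n)$ is bounded in $E$ and $v\in L^2\cap L^p$ has vanishing tails, this is uniformly small once $R$ is large, and the same bound controls the tail of the limit term.

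Choosing $R$ large to absorb the tails and then letting $n\to\infty$ on $B_R$ yields $\varphi'(u_n)v\to\varphi'(u)v$ for every $v\in E$, which is the required weak sequential continuity of $\varphi'$. Assembling the four parts then shows that $I(u)=\tfrac12\big(\|u^+\|^2-\|u^-\|^2\big)-\varphi(u)$ has the structure demanded in $(i)$ of Theorem \ref{AR}, completing the proof.
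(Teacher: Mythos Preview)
Your argument is correct and follows the paper closely for the first three items: you define the same $\varphi$, obtain boundedness below from $F_0\ge 0$, and derive weak sequential lower semi-continuity via Fatou's lemma exactly as the paper does.

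The one place you diverge is the weak sequential continuity of $\varphi'$. The paper argues that $(f_0(u_n))$ is bounded in $L^2(\mathbb{R}^N)$ and converges a.e., hence $f_0(u_n)\rightharpoonup f_0(u)$ weakly in $L^2$, and then simply pairs with $v\in L^2$. Your route is instead a spatial decomposition: strong convergence of the Nemytskii operator on $B_R$ via Rellich--Kondrachov, combined with a uniform tail bound coming from the decay of $\|v\|_{L^2(|x|>R)}$ and $\|v\|_{L^p(|x|>R)}$. Both are standard and both succeed here. The paper's version is shorter; your version is a bit more hands-on but has the advantage of never needing to place $f_0(u_n)$ in a single global Lebesgue space, which makes it transparently valid across the full subcritical range $p\in[2,2^*)$ without any side computation.
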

\begin{proof}
In view of $(f_1)_0$ one has $F_0\geq0$ and defining
\[
\varphi(u):= \int_{\mathbb{R}^N}F_0(u(x))\;dx, \; \text{for \ all} \; u \in E, 
\]
it follows that $\varphi\geq 0$, hence it is bounded from below. In addition, if $(u_n)\in E$ and ${u_n \rightharpoonup u}$ in $E$, then $u_n\to u$ in $L^2_{loc}(\mathbb{R}^N)$ and $u_n(x)\to u(x)$ almost everywhere in $\mathbb{R}^N$. Since $F_0$ is continuous, one has $F_0(u_n(x))\to F_0(u(x))$ almost everywhere in $\mathbb{R}^N$. Hence, applying Fatou's Lemma, we arrive at
\[
\varphi(u) = \int_{\mathbb{R}^N}\liminf_{n \to +\infty}F_0(u_n(x))\;dx\leq \liminf_{n \to +\infty}\int_{\mathbb{R}^N}F_0(u_n(x))\;dx = \liminf_{n \to +\infty}\varphi(u_n),
\]
namely, $\varphi$ is weakly sequentially lower semi-continuous. Furthermore, for all $u, v \in E$ one has
\[
\varphi'(u)v = \int_{\mathbb{R}^N}f_0(u(x))v(x)\;dx,
\]
hence if $u_n \rightharpoonup u$ in $E$, it implies that $\varphi'(u_n)v \to \varphi'(u)v$ for all $v \in E$, namely $\varphi'$ is weakly sequentially continuous. Indeed, since $(u_n)$ is bounded in $E$, from $(f_1)_0-(f_2)_0$ it is standard that $(f_0(u_n))$ is bounded in $L^2(\mathbb{R}^N)$. In addition, since $u_n(x)\to u(x)$ almost everywhere in $\mathbb{R}^N$, then $f_0(u_n(x))\to f_0(u(x))$ almost everywhere in $\mathbb{R}^N$ and thus, $f_0(u_n)\rightharpoonup f_0(u)$ in $L^2(\mathbb{R}^N)$, hence
\[
\varphi'(u_n)v = \int_{\mathbb{R}^N}f_0(u_n(x))v(x)\;dx \to \int_{\mathbb{R}^N}f_0(u(x))v(x)\;dx = \varphi'(u)v \quad \text{for \ all} \; v \in E,
\]
as $n \to +\infty.$ Therefore, in view of (\ref{I}), $I$ satisfies $(i)$ in Theorem \ref{AR}.
\end{proof}
\medskip

\qquad 
From hypothesis $(V_1)_0$ one has 
${a_0> \sigma^+ - f'(0)>0},$
hence the spectral family of operator $A_0$ ensures the existence of some $z_0 \in E^+$ with $||z_0|| =1$ and satisfying
\begin{equation}\label{le}
\big(\sigma^+ - f'(0)\big)||z_0||_{L^2(\mathbb{R}^N)}^2\leq ||z_0||^2 < a_0||z_0||^2_{L^2(\mathbb{R}^N)}.
\end{equation}
Choosing such $z_0$ and defining $M$ and $N$ as in Theorem \ref{AR}, condition $(ii)$ is given by the linking geometry proved as follows.

\begin{lemma} \label{l2}
Assuming $(f_1)_0-(f_2)_0$ and $(V_1)_0$, there exist $\beta >0$ and $R>r>0$ such that $I|_N \geq \beta$ and $I|_{\partial M}\leq 0$.
\end{lemma}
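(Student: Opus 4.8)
The plan is to verify the two inequalities separately, after decomposing $\partial M$ into its base $\{t=0\}\subset E^-$ and its spherical cap $\{\|u\|=R,\ t\ge0\}$.

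For the lower bound on $N=\partial B_r\cap E^+$, note that there $u=u^+$ and $I(u)=\tfrac12\|u\|^2-\int_{\mathbb R^N}F_0(u)$. Combining the vanishing of $f_0(s)/s$ at the origin in $(f_1)_0$ with the subcritical growth in $(f_2)_0$, I would first record, for each $\varepsilon>0$, a pointwise estimate of the form
\[ F_0(s)\le \tfrac{\varepsilon}{2}s^2+C_\varepsilon|s|^p, \qquad s\in\mathbb R, \]
for some $p\in(2,2^*)$. The continuous embeddings $E\hookrightarrow L^2(\mathbb R^N)$ and $E\hookrightarrow L^p(\mathbb R^N)$ then give $\int_{\mathbb R^N}F_0(u)\le \tfrac{\varepsilon}{2}C_1\|u\|^2+C_\varepsilon C_2\|u\|^p$, whence $I(u)\ge(\tfrac12-\tfrac{\varepsilon C_1}{2})\|u\|^2-C_\varepsilon C_2\|u\|^p$. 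Fixing $\varepsilon$ small so that the quadratic coefficient is, say, $\tfrac14$, and then choosing $r>0$ small (here $p>2$ is essential), the quantity $\tfrac14 r^2-C_\varepsilon C_2 r^p$ is a positive constant, which I take as $\beta$.

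The base of $\partial M$ is immediate: if $t=0$ then $u=u^-\in E^-$, so $I(u)=-\tfrac12\|u^-\|^2-\int_{\mathbb R^N}F_0(u)\le0$ because $F_0\ge0$ by $(f_1)_0$. The real work is the cap. Since $\|z_0\|=1$ we have $u^+=tz_0$ with $\|u^+\|=t$, so $I(u)\le\tfrac12 t^2-\tfrac12\|u^-\|^2$, which is already $\le0$ when $t^2\le\|u^-\|^2$; the difficulty is the complementary regime, where $t$ dominates and the quadratic part is positive. On the unbounded domain the naive lower bound $F_0(s)\ge\tfrac{a_0}{2}s^2-\mathrm{const}$ is not integrable, so I would argue by contradiction and weak compactness. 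Suppose no admissible $R$ works; then there are $u_n=u_n^-+t_n z_0$ with $t_n\ge0$, $\|u_n\|\to\infty$ and $I(u_n)>0$. From $F_0\ge0$ this forces $t_n^2>\|u_n^-\|^2$, so the normalized sequence $v_n:=u_n/\|u_n\|=s_nz_0+v_n^-$ satisfies $s_n^2>\tfrac12$; passing to a subsequence, $s_n\to s\ge1/\sqrt2$, $v_n\rightharpoonup v$ in $E$ and $v_n\to v$ a.e., with $v^+=sz_0\ne0$, hence $v\ne0$. Using $\liminf_{|s|\to\infty}F_0(s)/s^2\ge a_0/2$ (a consequence of $(f_2)_0$) and Fatou's lemma, $\liminf_n\|u_n\|^{-2}\int_{\mathbb R^N}F_0(u_n)\ge\tfrac{a_0}{2}\|v\|_{L^2}^2$. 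Dividing $I(u_n)>0$ by $\|u_n\|^2$ and passing to the limit, with weak lower semicontinuity of the norm on $E^-$, yields
\[ \tfrac{a_0}{2}\|v\|_{L^2}^2\le \tfrac12 s^2-\tfrac12\|v^-\|^2. \]

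The contradiction is closed by exploiting that $E^+=(I-\mathcal E(0))E$ and $E^-=\mathcal E(0)E$ are \emph{spectral} subspaces, hence orthogonal in $L^2(\mathbb R^N)$ as well as in $(\cdot,\cdot)$; therefore $\|v\|_{L^2}^2=s^2\|z_0\|_{L^2}^2+\|v^-\|_{L^2}^2$. The crossing condition \eqref{le} for $z_0$, namely $\|z_0\|^2=1<a_0\|z_0\|_{L^2}^2$, then gives $a_0\|v\|_{L^2}^2\ge a_0 s^2\|z_0\|_{L^2}^2>s^2\ge s^2-\|v^-\|^2$, contradicting the displayed inequality. Hence $I\le0$ on the cap for all large $R$, and choosing $R>r$ large completes $(ii)$. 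I expect this last extraction of a contradiction on the unbounded domain to be the crux: the decisive ingredients are the $L^2$-orthogonality of the spectral subspaces and the crossing inequality \eqref{le}, which is precisely what the interaction between the spectral gap and the size of $a_0$ in $(V_1)_0$ furnishes. In the superlinear case $(f_2)''$ the same scheme is only easier, since $F_0(s)/s^2\to+\infty$ makes the Fatou bound $+\infty$ and contradicts $I(u_n)>0$ at once.
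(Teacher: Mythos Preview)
Your argument is correct and follows the same overall strategy as the paper: the lower bound on $N$ is obtained via the standard growth estimate on $F_0$ and Sobolev embeddings, and on $\partial M$ you both argue by contradiction, normalize $u_n$ to $v_n=s_nz_0+v_n^-$, show $s^2\ge\tfrac12$, and then use Fatou together with the crossing inequality \eqref{le} to close the contradiction.

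The one genuine technical difference is in how the contradiction on the cap is extracted. The paper first passes to a \emph{bounded} domain $\Omega\subset\mathbb R^N$ chosen so that $1=\|z_0\|^2<a_0\int_\Omega|z_0|^2$ (possible by \eqref{le}), applies Fatou on $\Omega$, and compares with the identity $s_n^2+\|v_n^-\|^2=1$. You instead apply Fatou on all of $\mathbb R^N$ and then split $\|v\|_{L^2}^2=s^2\|z_0\|_{L^2}^2+\|v^-\|_{L^2}^2$ using the $L^2(\mathbb R^N)$-orthogonality of the spectral subspaces $E^\pm$. Your route avoids the localization step and makes the role of the spectral decomposition explicit; it also sidesteps any issue with the cross term $\int_\Omega z_0v^-$, which need not vanish on a bounded $\Omega$. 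The paper's localization, on the other hand, keeps the compact embedding $E\hookrightarrow L^2(\Omega)$ in play and does not appeal to the global $L^2$-orthogonality. Both arguments ultimately hinge on the same two ingredients you single out: the strict inequality $\|z_0\|^2<a_0\|z_0\|_{L^2}^2$ from \eqref{le} and $F_0\ge0$.
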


\begin{proof}
From $(f_1)_0-(f_2)_0$, given $\varepsilon>0$, there exist $C_\varepsilon>0$ such that
\[
|F_0(t)|\leq \dfrac{1}{2}\varepsilon|t|^2 + \dfrac{C_\varepsilon}{p}|t|^p \quad \text{for \ all} \; t \in [0,1] \quad  \text{and \ for \ some} \; p \in [2,2^*).
\]
Thus, given $u^+ \in N$ one has
\begin{eqnarray*}
I(u^+) &=& \dfrac{1}{2}||u^+||^2 - \int_{\mathbb{R}^N}F_0(u^+(x))\;dx\\
&\geq&\dfrac{1}{2}||u^+||^2 - \dfrac{1}{2}\varepsilon||u^+||^2_{L^2(\mathbb{R}^N)} - \dfrac{C_\varepsilon}{p}||u^+||^p_{L^p(\mathbb{R}^N)}\\
&\geq& \dfrac{r^2}{2}\Big(1 - \varepsilon C^2_2 - \dfrac{C_\varepsilon}{p}C_p^pr^{p-2}\Big)\\
&=&\beta>0,
\end{eqnarray*}
where, $C_q>0$ is the constant given by the continuous embedding $E\hookrightarrow L^q(\mathbb{R}^N)$ for $q \in [2,2^*]$ and we choose $\varepsilon>0, \; r>0$ small enough to guarantee $\beta>0$.

\qquad On the other hand, suppose by contradiction that $I|_{\partial M}\leq0$ does not hold true for all $R>r$. Then, given $R>r$ there exist $u^-_R \in E^-$ and $t_R>0$ such that $u_R = u^-_R + t_Rz_0,$ \ $||u_R|| = R$ and $I(u_R)>0$. Choosing a sequence $R_n \to +\infty$ as $n \to +\infty$ one obtain a sequence $(u_n) \subset E$ with $u_n = u^-_n + t_nz_0, \; ||u_n||= R_n \to +\infty$ as $n \to +\infty$ and $I(u_n)>0$ for all $n\in \mathbb{N}$.

\qquad Defining $v_n:= \dfrac{u_n}{||u_n||} = v^-_n + s_nz_0$ with $v^-_n = \dfrac{u^-_n}{||u_n||}$ and $s_n = \dfrac{t_n}{||u_n||}$, it follows that $(v_n)\subset E$ is bounded with
\begin{equation}\label{lge00}
1 = ||v_n||^2 = ||v^-_n||^2 + s_n^2,
\end{equation}
hence $v_n \rightharpoonup v$ in $E$, $v^-_n \rightharpoonup v^-$ in $E$ and $s_n \to s$ in $\mathbb{R}^+$, up to subsequences, where 
\begin{equation}\label{lge000}
v = v^- + sz_0. 
\end{equation}
Moreover, by assumption
\begin{eqnarray}\label{lge1}
0< \dfrac{I(u_n)}{||u_n||^2} &=& \dfrac{1}{2}\Big(s_n^2 - ||v_n^-||^2\Big) - \int_{\mathbb{R}^N}\dfrac{F_0(u_n(x))}{||u_n||^2}\;dx\nonumber\\
&=&s_n^2 -\dfrac{1}{2} - \int_{\mathbb{R}^N}\dfrac{F_0(u_n(x))}{||u_n||^2}\;dx\nonumber\\
&\leq& s_n^2 - \dfrac{1}{2},
\end{eqnarray}
which, in view of (\ref{lge00}), implies that $\dfrac{1}{2}\leq s^2\leq1$. 

\qquad From (\ref{le}) it is possible to choose $\Omega \subset \mathbb{R}^N$ a bounded domain such that
\begin{equation}\label{lge0}
1 = ||z_0||^2 < a_0\int_{\Omega}|z_0(x)|^2\;dx.
\end{equation} 
Then  (\ref{lge00}) and (\ref{lge0}) imply that
\begin{equation}\label{lge2}
s^2 - s^2a_0\int_{\Omega}|z_0(x)|^2\;dx -(1-s^2) - a_0\int_{\Omega}|v^-(x)|^2\;dx<0.
\end{equation}
On the other hand, from (\ref{lge1}) we get
\begin{equation}\label{lge3}
0< s^2_n -\dfrac{1}{2} - \int_{\Omega}\dfrac{F_0(u_n(x))}{||u_n||^2}\;dx.
\end{equation}
In virtue of (\ref{lge000}) and since $s^2\geq \dfrac{1}{2}$, one has $v \not=0$ and from (\ref{lge0}) it yields $|\Omega \cap supp(v)|>0$. Provided that $|u_n(x)|\to +\infty$ for all $x \in \Omega\cap supp(v)$, from $(f_2)_0$ it follows that
 \[
 \lim_{n\to+\infty}\dfrac{F_0(u_n(x))}{||u_n||^2} = \lim_{n\to+\infty} \dfrac{F_0(u_n(x))}{u^2_n(x)}v^2_n(x) \geq \dfrac{a_0}{2}v^2(x),
 \]
 almost everywhere in $\Omega \cap supp(v)$. Furthermore, $v_n \to v$ in $L^2(\Omega)$, up to subsequences, as $n \to +\infty$, then
 by Fatou's Lemma, it follows that
 \[
 \liminf_{n\to+\infty}\int_{\Omega}\dfrac{F_0(u_n(x))}{||u_n(x)||^2}\;dx \geq
 \dfrac{a_0}{2}\int_{\Omega}|v(x)|^2\;dx,
 \]
 as $n\to +\infty.$ Since (\ref{lge000}) implies that $v(x) = v^-(x) + sz_0(x)$, passing (\ref{lge3}) to the limit and multiplying by two, we arrive at
 \begin{eqnarray*}
  0&\leq& 2s^2 - {1} - {a_0}\int_{\Omega}\Big(s^2|z_0(x)|^2 + |v^-(x)|^2\Big)\;dx,\\
  &=&s^2 - s^2{a_0}\int_{\Omega}|z_0(x)|^2\;dx - (1-s^2) - {a_0}\int_{\Omega}|v^-(x)|^2\;dx,
 \end{eqnarray*}
   which contradicts (\ref{lge2}). Therefore, we conclude there exists $R>0$ large enough satisfying the required inequality.
\end{proof}

\qquad Note that the strict inequality in (\ref{le}), inherited from $(V_1)_0$, was essential to obtain the suitable $z_0$ to prove Lemma \ref{le} and obtain $(ii)$.
Furthermore, up to now we have verified all assumptions in Theorem \ref{AR}, we obtain a Cerami sequence for $I$ on the level $c\geq\beta>0$, under the hypotheses of Theorem \ref{main}.

\section{Boundedness of Cerami Sequences}

\qquad In this section we are going to prove that, under the assumptions of Theorems \ref{main}, every Cerami sequence for $I$ is bounded, particularly, those found by Theorem \ref{AR} in the previous section. Hereafter, we prove two lemmas treating singly both cases, asymptotically linear and superlinear. Hence, firstly it is necessary identify hypotheses $(f_2)'_0$ and $(f_2)_0''$ corresponding to $(f_2)'$ and $(f_2)''$, respectively, in order to prove these lemmas. They are stated below:
\bigskip
\newline
$(f_2)_0'$\quad For $a_0:= a - f'(0)$ it holds
\[
\displaystyle\lim_{|s| \to +\infty}\dfrac{f_0(s)}{s}= a_0;
\]
\newline
$(f_2)_0''$\quad For some $p\in(2,2^*)$ it holds
\[
\displaystyle\lim_{|s| \to +\infty}\dfrac{f_0(s)}{s}= +\infty\quad \text{and} \quad \limsup_{|s|\to+\infty}\dfrac{f_0(s)}{s^{p-1}} <+\infty.
\]
\medskip

\qquad With hypothesis $(f_2)'_0$ in hand, the asymptotically linear case is treated as follows.
\medskip
\begin{lemma}\label{B}
Let $(u_n)$ be a $(C)_c$ sequence for $I$. Under the assumption of $(V_1)_0-(V_2)_0$, ${(f_1)_0-(f_2)'_0}$ and $(f_3)$, it follows that $(u_n)$ is bounded.
\end{lemma}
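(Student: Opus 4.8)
The plan is to prove boundedness of the Cerami sequence $(u_n)$ by a contradiction argument, assuming $\|u_n\| \to +\infty$ and rescaling to $v_n := u_n/\|u_n\|$. The key structural tool is the Cerami condition itself, which gives us two families of estimates: from $I(u_n) \to c$ we control the functional values, and from $\|I'(u_n)\|_{E'}(1+\|u_n\|) \to 0$ we obtain $I'(u_n)u_n \to 0$ and more generally $I'(u_n)\varphi \to 0$ tested against suitable functions. The quantity $Q_0(s) = f_0(s)s - 2F_0(s)$ enters precisely because the combination $2I(u_n) - I'(u_n)u_n = \int_{\mathbb{R}^N} Q_0(u_n)\,dx$ eliminates the indefinite quadratic part, leaving $\int Q_0(u_n)\,dx \to 2c$, a bounded quantity. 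This is the first step I would carry out, as it is the main payoff of hypothesis $(f_3)$.

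Since $(v_n)$ is bounded, pass to a weak limit $v_n \rightharpoonup v$ in $E$, with strong convergence in $L^2_{loc}$ and pointwise a.e. convergence. The argument then splits according to whether $v = 0$ or $v \neq 0$. If $v \neq 0$, on the set where $v(x) \neq 0$ we have $|u_n(x)| = \|u_n\|\,|v_n(x)| \to +\infty$, and by $(f_2)'_0$ the integrand $F_0(u_n)/\|u_n\|^2 = (F_0(u_n)/u_n^2)v_n^2 \to \tfrac{a_0}{2}v^2$ pointwise, feeding a Fatou-type lower bound. Dividing the relation $2I(u_n)/\|u_n\|^2 = s_n^2 - \|v_n^-\|^2 - 2\int F_0(u_n)/\|u_n\|^2\,dx \to 0$ and comparing with the quadratic identity $\|v_n^+\|^2 + \|v_n^-\|^2 = 1$ should yield a contradiction with the spectral gap, exactly as the argument in Lemma \ref{l2} exploited $a_0 > \sigma^+ - f'(0)$. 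The harder branch is $v = 0$, where the pointwise limits vanish and one must instead use the vanishing of $\int Q_0(u_n)\,dx$ together with the divergence $\lim_{|s|\to\infty} Q_0(s) = +\infty$ from $(f_3)$.

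The main obstacle, as anticipated in the paper's own remark, is precisely handling the asymptotically linear case where $Q_0$ need not grow like a power; we have only $Q_0 \geq 0$ and $Q_0(s) \to +\infty$. To exploit this I would argue that if $(v_n)$ does not vanish then a nontrivial weak limit forces the contradiction above, while if $(v_n)$ vanishes (in a suitable locally-uniform or $L^q$ sense) then a concentration-compactness or P.\ L.\ Lions-type vanishing lemma must be invoked. Here periodicity $(V_2)_0$ is essential: if $v_n \rightharpoonup 0$ one translates $u_n$ by integer vectors $y_n \in \mathbb{Z}^N$ to recenter the mass of $v_n$, using periodicity of $V_0$ so that the shifted sequence is still a Cerami sequence with the same energy, and extract a nonzero weak limit of the translates. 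The delicate point is to rule out total vanishing of $v_n$ in $L^2$ on unit cubes, which would contradict $\|v_n\| = 1$ combined with the boundedness of $\int Q_0(u_n)\,dx$ and the fact that $F_0(s)/s^2$ stays bounded away from $0$ for large $|s|$ in the asymptotically linear regime.

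Finally I would assemble the two branches: in every scenario either the spectral-gap inequality is violated or the integral $\int Q_0(u_n)\,dx$ is forced to diverge, contradicting its convergence to $2c$. Therefore the assumption $\|u_n\| \to +\infty$ is untenable and $(u_n)$ must be bounded. I expect the translation argument and the careful separation of the vanishing and non-vanishing cases to occupy the bulk of the proof, whereas the algebraic identities relating $I$, $I'$ and $Q_0$ are routine once the Cerami structure is in place.
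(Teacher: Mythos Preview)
Your high-level scaffold (contradiction, rescale to $v_n=u_n/\|u_n\|$, use $2I(u_n)-I'(u_n)u_n=\int Q_0(u_n)\,dx\to 2c$, dichotomy on the weak limit, translations via periodicity) matches the paper. However, two of your branches are not set up correctly. For $v\neq 0$ you propose to replay the spectral-gap computation of Lemma~\ref{l2}; that argument hinged on $u_n\in E^-\oplus\mathbb{R}z_0$ with the \emph{fixed} direction $z_0$ satisfying $\|z_0\|^2<a_0\|z_0\|_{L^2}^2$, a structure a generic Cerami sequence does not have. The paper instead uses the tool you already prepared: on $\{v\neq 0\}$ one has $|u_n(x)|\to+\infty$, so $(f_3)$ and Fatou give $\int Q_0(u_n)\,dx\to+\infty$, contradicting $\int Q_0(u_n)\,dx\to 2c$. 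The same mechanism, after translation, disposes of the non-vanishing case.

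The genuine gap is the truly vanishing case. Vanishing of $(v_n)$ in the Lions sense is \emph{not} incompatible with $\|v_n\|=1$, so your suggested contradiction (``would contradict $\|v_n\|=1$\ldots'') does not close the argument; one must instead \emph{use} vanishing to derive a different contradiction. The paper's key move, which your proposal is missing, is to test $I'(u_n)$ against $(u_n^+-u_n^-)/\|u_n\|$, yielding
\[
\int_{\mathbb{R}^N}\frac{f_0(u_n)}{u_n}\big[(v_n^+)^2-(v_n^-)^2\big]\,dx\;\longrightarrow\;1.
\]
Splitting by the level sets $\Omega_n=\{|u_n|<\delta\}$, the contribution from $\Omega_n$ is $O(\varepsilon)$ by $(f_1)_0$, while on $\mathbb{R}^N\setminus\Omega_n$ H\"older and the boundedness of $f_0(s)/s$ (asymptotically linear) give a bound $C\,|\mathbb{R}^N\setminus\Omega_n|^{(q-2)/q}\|v_n\|_{L^q}^{2/q}$. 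Lions' lemma sends $\|v_n\|_{L^q}\to 0$, so $|\mathbb{R}^N\setminus\Omega_n|\to+\infty$. Since $(f_3)$ forces $|\{|u_n|>M\}|$ bounded (as $Q_0>1$ there), the intermediate set $W_n=\{\delta\le|u_n|\le M\}$ has $|W_n|\to+\infty$; but $Q_0\ge m>0$ on $[\delta,M]$, whence $\int Q_0(u_n)\,dx\ge m|W_n|\to+\infty$, again contradicting boundedness by $2c$. This level-set/measure argument is the substance of the asymptotically linear case and should replace your final paragraph.
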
	

\begin{proof}
Arguing by contradiction we suppose $||u_n||\to +\infty$ as $n \to +\infty$ up to subsequences. Defining $v_n := \dfrac{u_n}{||u_n||}$, we have $(v_n)\subset E$ bounded, hence $v_n \rightharpoonup v$ in $E$ and $v_n \to v$ in $L^q_{loc}(\mathbb{R}^N)$ for $q \in [2,2^*)$, then $v_n(x) \to v(x)$ almost everywhere in $\mathbb{R}^N$.

\qquad Suppose that $v\not=0$, hence there exists $\Omega \subset \mathbb{R}^N$ such that $v(x)\not=0$ for all $x \in \Omega$ and $|\Omega|>0$. Since $u_n(x) = ||u_n||v_n(x) \to +\infty$ for all $x \in \Omega$, then  $(f_3)$ implies that
\begin{eqnarray}\label{lbe1}
\liminf_{n \to +\infty}\int_{\mathbb{R}^N}Q_0(u_n(x))\;dx &\geq&\liminf_{n \to +\infty}\int_{\Omega}Q_0(u_n(x))\;dx\nonumber\\
&\geq& \int_{\Omega}\liminf_{n \to +\infty}Q_0(u_n(x))\;dx\nonumber\\
&=& +\infty
\end{eqnarray}

\qquad On the other hand, since $(u_n)$ is a $(C)_c$ sequence for $I$, we have
\begin{eqnarray}\label{lbe0}
2c + o_n(1) &=& 2I(u_n) - I'(u_n)u_n \nonumber\\
&=& \int_{\mathbb{R}^N}Q_0(u_n(x))\;dx,
\end{eqnarray}
contradicting (\ref{lbe1}). Thus, $v = 0$, the null function in $E$.

\qquad Furthermore, writing $u_n = u_n^+ + u_n^-$, since $(u_n)$ is a Cerami sequence it follows that
\begin{eqnarray}\label{lbe2}
o_n(1) &=& I'(u_n)\dfrac{\big(v_n^+ - v_n^-\big)}{||u_n||}\nonumber\\
&=& \Big(v_n,v_n^+-v_n^-\Big) - \int_{\mathbb{R}^N}\dfrac{f_0(u_n(x))}{||u_n||}\big(v_n^+(x) - v_n^-(x)\big)\;dx\nonumber\\
&=&||v_n^+||^2 + ||v_n^-||^2 - \int_{\mathbb{R}^N}\dfrac{f_0(u_n(x))}{u_n(x)}\big[(v_n^+(x))^2 - (v_n^-(x))^2\big]\;dx\nonumber\\
&=& 1 - \int_{\mathbb{R}^N}\dfrac{f_0(u_n(x))}{u_n(x)}\big[(v_n^+(x))^2 - (v_n^-(x))^2\big]\;dx.
\end{eqnarray}
Hence, 
\begin{equation}\label{lbe4}
\displaystyle\int_{\mathbb{R}^N}\dfrac{f_0(u_n(x))}{u_n(x)}\big[(v_n^+(x))^2 - (v_n^-(x))^2\big]\;dx \to 1 \quad \text{as} \quad n \to +\infty.
\end{equation}

\qquad Since \; $E \hookrightarrow L^2(\mathbb{R}^N)$ continuously, \; let $C_2>0$ be \; the constant \; such that ${||u||_{L^2(\mathbb{R}^N)}\leq C_2||u||}$ for all $u \in E$. Taking $0<\varepsilon < \dfrac{1}{C_2^2}$, in view of $(f_1)_0$ there exists $\delta>0$ such that 
\[
\left|\dfrac{f_0(s)}{s}\right|< \varepsilon \quad \text{for} \quad 0< |s| < \delta.
\]
Now, for all $n \in \mathbb{N}$ we define $\Omega_n := \{x\in \mathbb{R}^N: |u_n(x)| < \delta \}$ and then we obtain
\begin{eqnarray}\label{lbe3}
\displaystyle\int_{\Omega_n}\left|\dfrac{f_0(u_n(x))}{u_n(x)}\right|\Big[(v_n^+(x))^2 + (v_n^-(x))^2\Big]\;dx &\leq&\varepsilon\int_{\Omega_n}\Big[(v_n^+(x))^2 + (v_n^-(x))^2\Big]\;dx \nonumber\\
&\leq& \varepsilon\big[||v_n^+||_{L^2(\mathbb{R}^N)}^2 + ||v_n^-||^2_{L^2(\mathbb{R}^N)}\big]\nonumber\\
&\leq& {\varepsilon}C_2^2||v_n||^2\nonumber\\
&=& \varepsilon C_2^2,
\end{eqnarray}
which implies that
\[
\liminf_{n \to +\infty}\displaystyle\int_{\Omega_n}\dfrac{f_0(u_n(x))}{u_n(x)}\Big[(v_n^+(x))^2 - (v_n^-(x))^2\Big]\;dx \leq \varepsilon C_2^2 <1.
\]
Thus, from (\ref{lbe4}) we conclude that
\begin{equation}\label{lbe5}
\liminf_{n \to +\infty}\displaystyle\int_{\mathbb{R}^N \slash\Omega_n}\dfrac{f_0(u_n(x))}{u_n(x)}\Big[(v_n^+(x))^2 - (v_n^-(x))^2\Big]\;dx>0.
\end{equation}

\qquad On the other hand, from $(f_1)_0-(f_2)'_0$, one has $\left|{f_0(s)}/{s}\right|$ bounded, hence applying H\"older inequality for $q \in (2,2^*)$ we arrive at
\begin{equation}\label{lbe6}
\int_{\mathbb{R}^N \slash\Omega_n}\dfrac{f_0(u_n(x))}{u_n(x)}\Big[(v_n^+(x))^2 - (v_n^-(x))^2\Big]\;dx \leq C |\mathbb{R}^N\slash \Omega_n|^{\frac{q-2}{q}}||v_n||^{\frac{2}{q}}_{L^q(\mathbb{R}^N)}.
\end{equation}

\qquad Now, we observe that if $(v_n)$ is a non-vanishing sequence, then there exist $r, \; \eta >0$ and a sequence $(y_n)\in \mathbb{R}^N$ such that
\begin{equation}\label{lbe7}
\limsup_{n\to +\infty}\int_{B_r(y_n)}|v_n(x)|^2\;dx > \eta.
\end{equation}
Defining $\tilde{v}_n(x):= v_n(x +y_n)$ it implies that $(\tilde{v}_n)$ is also bounded and hence $\tilde{v}_n \rightharpoonup \tilde{v}$ in $E$ and $\tilde{v}_n \to \tilde{v}$ in $L^2_{loc}(\mathbb{R}^N)$, then
\begin{equation}\label{lbe8}
||\tilde{v}||^2_{L^2(B_r(0))} = \lim_{n \to +\infty}\int_{B_r(0)}|\tilde{v}_n(x)|^2\;dx = \limsup_{n\to +\infty}\int_{B_r(y_n)}|v_n(x)|^2\;dx > \eta.
\end{equation}
Thus, $\tilde{v}\not=0$, namely, there exist $\tilde{\Omega}\subset \mathbb{R}^N$ such that $|\tilde{\Omega}|>0$
and $\tilde{v}(x)\not=0$ for all $x \in \tilde{\Omega}$. Defining also 
\begin{equation}\label{lbe08}
\tilde{u}_n(x) := {u}_n(x+y_n),
\end{equation}
in view of the equivalence between $E$-norm and $H^1(\mathbb{R}^N)$-norm we have $||\tilde{u}_n|| \to +\infty$ as $n\to +\infty$ and since
\[
\int_{\mathbb{R}^N}Q_0(u_n(x))\;dx = \int_{\mathbb{R}^N}Q_0(\tilde{u}_n(x))\;dx,
\]
arguing as in (\ref{lbe1})-(\ref{lbe0}) with $(\tilde{v}_n)$ instead of $({v}_n)$ we arrive at contradiction. Therefore, $(v_n)$ cannot be a non-vanishing sequence, namely, $(v_n)$ is a vanishing sequence and hence
\begin{equation}\label{lbe09}
\limsup_{n\to +\infty}\sup_{y \in \mathbb{R}^N}\int_{B_r(y)}|v_n(x)|^2\; dx = 0, \quad \forall \; r>0.
\end{equation}
Applying Lion's Lemma, since $(v_n)$ is a vanishing sequence it yields $||v_n||_{L^q(\mathbb{R}^N)} \to 0$ as $n \to +\infty$ for $q \in (2,2^*)$ and in view of $(\ref{lbe5})-(\ref{lbe6})$ it follows that
\begin{equation}\label{lbe9}
{|\mathbb{R}^N\slash\Omega_n| \to +\infty}  \quad \text{as}  \quad n \to +\infty.
\end{equation}

\qquad On the other hand, without loss of generality, from $(f_3)$ there exists $M>\delta$ such that $Q_0(s)>1$ for all $s>M$. Then, we define $U_n:=\{x\in \mathbb{R}^N: |u_n(x)|>M\}$ and provided that $(u_n)$ is a $(C)_c$ sequence for $I$, it follows that
\[
2c + o_n(1) = 2I(u_n) - I'(u_n)u_n \geq \int_{U_n}Q_0(u_n(x))\;dx \geq |U_n|,
\]
hence, $(|U_n|)$ is a bounded sequence. Defining also,
\[
W_n:= \{x\in \mathbb{R}^N: \delta\leq|u_n(x)|\leq M\},
\]
we obtain $\mathbb{R}^N\slash \Omega_n = U_n \cup W_n$, with $U_n$ and $W_n$ disjointed sets, then $|\mathbb{R}^N\slash \Omega_n|= |U_n| + |W_n|$ and since $(|U_n|)$ is bounded, we conclude that $|W_n|\to +\infty.$

\qquad Moreover, provided that $Q_0(s)>0$ is a continuous function, it implies that 
\[
{m:= \displaystyle\inf_{s\in[\delta,M]}Q_0(s)}
\]
is a positive constant. Thus,
\begin{equation}\label{lbe10}
\int_{\mathbb{R}^N}Q_0(u_n(x))\;dx \geq \int_{W_n}Q_0(u_n(x))\;dx \geq m|W_n| \to +\infty, 
\end{equation}
as $n \to +\infty$. Nevertheless, since $(u_n)$ is a $(C)_c$ sequence (\ref{lbe10}) also contradicts (\ref{lbe0}). Therefore, $(u_n)$ is bounded.
\end{proof}	

\qquad Now, considering the superlinear case, next lemma gives the boundedness.
\begin{lemma}\label{B2}
Let $(u_n)$ be a $(C)_c$ sequence for $I$. Under the assumption of $(V_1)_0-(V_2)_0$, ${(f_1)_0-(f_2)''_0}$ and $(f_4)$, it follows that $(u_n)$ is bounded.
\end{lemma}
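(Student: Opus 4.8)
The plan is to argue by contradiction, following the skeleton of Lemma \ref{B}: assume $\|u_n\|\to+\infty$ and set $v_n:=u_n/\|u_n\|$, so $\|v_n\|=1$ and, up to a subsequence, $v_n\rightharpoonup v$ in $E$ with $v_n\to v$ in $L^q_{loc}(\mathbb{R}^N)$ for $q\in[2,2^*)$. The starting point is the same coercivity-type identity as in (\ref{lbe0}): since $(u_n)$ is a $(C)_c$ sequence, $2I(u_n)-I'(u_n)u_n=\int_{\mathbb{R}^N}Q_0(u_n)\,dx=2c+o_n(1)$ is bounded. Now, crucially, hypothesis $(f_4)$ (which also gives $Q_0\geq c_0|s|^\theta\geq0$) upgrades this to an a priori bound
\[
c_0\int_{\mathbb{R}^N}|u_n|^\theta\,dx\leq\int_{\mathbb{R}^N}Q_0(u_n)\,dx\leq C,
\]
that is, $(u_n)$ is bounded in $L^\theta(\mathbb{R}^N)$. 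This $L^\theta$ bound, unavailable in the asymptotically linear setting, is what will replace the boundedness of $f_0(s)/s$ exploited in Lemma \ref{B}. Testing $I'(u_n)$ with $(u_n^+-u_n^-)/\|u_n\|$ and invoking the Cerami condition gives, exactly as in (\ref{lbe2})--(\ref{lbe4}), that $\int_{\mathbb{R}^N}\frac{f_0(u_n)}{u_n}\big[(v_n^+)^2-(v_n^-)^2\big]\,dx=1+o_n(1)$.

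Next I would split $\mathbb{R}^N$ into $\Omega_n:=\{|u_n|<\delta\}$ and its complement, with $\delta$ chosen from $(f_1)_0$ so that $|f_0(s)/s|<\varepsilon$ for $0<|s|<\delta$; the $\Omega_n$-contribution is at most $\varepsilon C_2^2$, whence $\liminf_n\int_{\mathbb{R}^N\setminus\Omega_n}\frac{f_0(u_n)}{u_n}[(v_n^+)^2-(v_n^-)^2]\,dx\geq 1-\varepsilon C_2^2>0$. On $\mathbb{R}^N\setminus\Omega_n$ we have $|u_n|\geq\delta$, so the subcritical bound in $(f_2)_0''$ yields $|f_0(u_n)/u_n|\leq C|u_n|^{p-2}$. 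The heart of the proof is then a Hölder/interpolation estimate: writing $(v_n^+)^2-(v_n^-)^2=v_n(v_n^+-v_n^-)$ and applying Cauchy--Schwarz with the nonnegative weight $|f_0(u_n)/u_n|$, I would bound the remaining integral by
\[
C\Big(\int_{\mathbb{R}^N}|u_n|^\theta\Big)^{(p-2)/\theta}\,\|v_n\|_{L^{2a'}}\,\|v_n^+-v_n^-\|_{L^{2a'}},\qquad a'=\frac{\theta}{\theta-(p-2)}.
\]
The exponents are forced: Hölder with $a=\theta/(p-2)$ converts $\int|u_n|^{(p-2)a}=\int|u_n|^\theta$, which is bounded, and the dual exponent produces $L^{2a'}$ norms; an elementary computation shows that $\theta>\tfrac{N}{2}(p-2)$ is \emph{exactly} the condition guaranteeing $2<2a'<2^*$, so that the embedding $E\hookrightarrow L^{2a'}(\mathbb{R}^N)$ gives $\|v_n^+-v_n^-\|_{L^{2a'}}\leq C$ while $2a'$ remains strictly subcritical.

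Finally I would close with the vanishing/non-vanishing dichotomy, as in Lemma \ref{B}. If $(v_n)$ does not vanish, a translation $\tilde v_n:=v_n(\cdot+y_n)\rightharpoonup\tilde v\neq0$ produces, via $\tilde u_n:=u_n(\cdot+y_n)$ with $\|\tilde u_n\|\to+\infty$ and $\int Q_0(\tilde u_n)=\int Q_0(u_n)$, a set of positive measure on which $Q_0(\tilde u_n)\to+\infty$, contradicting the boundedness of $\int Q_0(u_n)$ through Fatou's Lemma. Hence $(v_n)$ vanishes, and Lions' Lemma yields $\|v_n\|_{L^{2a'}}\to0$ since $2<2a'<2^*$. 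Substituting this into the displayed bound forces $\int_{\mathbb{R}^N\setminus\Omega_n}\frac{f_0(u_n)}{u_n}[(v_n^+)^2-(v_n^-)^2]\,dx\to0$, contradicting the strict lower bound $1-\varepsilon C_2^2>0$. This contradiction shows $\|u_n\|$ must stay bounded.

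I expect the main obstacle to be the interpolation step: one has to marry the $L^\theta$ bound delivered by $(f_4)$ with the subcritical growth of $(f_2)_0''$ so that the resulting Sobolev exponent $2a'$ lands strictly below $2^*$, and this is precisely where the numerical assumption $\theta>\tfrac{N}{2}(p-2)$ is consumed. A secondary but genuine technical point is that the spectral projections $v_n^\pm$ need not inherit the vanishing of $v_n$; the Cauchy--Schwarz splitting above is what sidesteps this, since it isolates the single factor $\|v_n\|_{L^{2a'}}\to0$ while the projection is carried only by the harmless, merely bounded, factor $\|v_n^+-v_n^-\|_{L^{2a'}}$.
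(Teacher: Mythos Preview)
Your argument is correct, but it takes a genuinely different route from the paper's. The paper gives a \emph{direct} proof with no contradiction and no concentration--compactness: from $I'(u_n)u_n^\pm=o_n(1)$ and the growth bound in $(f_2)_0''$ it first obtains $(1-2C_2^2\varepsilon)\|u_n\|^2\leq C+2C_\varepsilon\|u_n\|_{L^p}^p$; then, exactly as you do, it extracts the $L^\theta$ bound $\|u_n\|_{L^\theta}\leq C$ from $(f_4)$; finally it interpolates $L^p$ between $L^\theta$ and $L^2$ (if $p<\theta$) or between $L^\theta$ and $L^{2^*}$ (if $p>\theta$), the latter producing $\|u_n\|_{L^p}^p\leq C\|u_n\|^{2^*t}$ with $2^*t<2$ precisely when $\theta>\tfrac{N}{2}(p-2)$. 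This yields a sublinear inequality $\|u_n\|^2\leq C+C\|u_n\|^\alpha$ with $\alpha<2$, hence boundedness. Your approach, by contrast, transplants the contradiction scheme of Lemma~\ref{B} to the superlinear setting, replacing the boundedness of $f_0(s)/s$ by the H\"older estimate $\int|u_n|^{p-2}|v_n|\,|v_n^+-v_n^-|\leq \|u_n\|_{L^\theta}^{p-2}\|v_n\|_{L^{2a'}}\|v_n^+-v_n^-\|_{L^{2a'}}$ and then invoking Lions' lemma; both proofs consume the numerical hypothesis $\theta>\tfrac{N}{2}(p-2)$ at the same conceptual point (landing a Sobolev exponent strictly below $2^*$). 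The paper's argument is shorter and avoids the vanishing/non-vanishing dichotomy and the translation step entirely; yours has the virtue of keeping the asymptotically linear and superlinear proofs structurally parallel, and your Cauchy--Schwarz device for decoupling $v_n$ from $v_n^+-v_n^-$ is a nice way to ensure that only $\|v_n\|_{L^{2a'}}$ needs to vanish.
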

\begin{proof}
	Let $b>0$  be  a  constant  such  that  $c-b>0$, since $(u_n)$ is a $(C)_c$ sequence for $I$ it follows that ${(u_n) \subset I^{-1}([c-b,c+b])}$ for sufficiently large $n$. Moreover, $(u_n)$ satisfies
	\begin{eqnarray}\label{b2e1}
	o_n(1)= I'(u_n){u}^+_n =||{u}^+_{n}||^2 - \int_{\mathbb{R}^N}f_0(u_n(x)){{u}^+_n(x)}\; dx
	\end{eqnarray}
	and
	\begin{eqnarray}\label{b2e2}
	o_n(1)= {I'(u_n)({u}^-_n)} =-||{u}^-_{n}||^2 - \int_{\mathbb{R}^N}f_0(u_n(x)){{u}^-_n(x)} \; dx. 
	\end{eqnarray}
	Subtracting (\ref{b2e2}) from (\ref{b2e1}), and using $(f_1)_0-(f_2)''_0$, it yields
	\begin{eqnarray}\label{b2e3}
	||u_n||^2&=& o_n(1) + \int_{\mathbb{R}^N}f_0(u_n)\Big(u^+_n(x) -u^-_n(x)\Big)\, dx\nonumber\\
	&\leq& C + 2\int_{ \mathbb{R}^N}f_0(u_n)|u_n(x)|\ dx\nonumber\\
	&\leq& C + 2\int_{\mathbb{R}^N}\Big(\varepsilon|u_n(x)|^2+C_\varepsilon|u_n(x)|^p\Big)\, dx\nonumber\\
	&\leq& C + 2C^2_2\varepsilon||u_n||^2 + 2C_\varepsilon||u_n||^p_{L^p(\mathbb{R}^N)},
	\end{eqnarray}
	for arbitrary $\varepsilon>0$, some constants $C, \ C_\varepsilon >0$ and for $C_2>0$ a constant given by the embedding $E\hookrightarrow L^2_(\mathbb{R}^N)$. Hence, in view of (\ref{b2e3}), one has
	\begin{equation}\label{b2e4}
	\big(1 - 2C_2^2\varepsilon\big)||u_n||^2 \leq C + 2C_\varepsilon||u_n||^p_{L^p(\mathbb{R}^N)},
	\end{equation}
	which means that for sufficiently small $\varepsilon>0$, the boundedness of $(u_n)$ in $E$ is directly related to the boundedness of $(u_n)$ in $L^p(\mathbb{R}^N)$. So, it turns out to be necessary to estimate the  $||u_n||_{L^p(\mathbb{R}^N)}$. Observe that from $(f_4)$ and since $(u_n)$ is a Cerami sequence, for some constant $M>0$, one has
	\begin{eqnarray}\label{b2e5}
	M&\geq& I(u_n) - \dfrac{1}{2}I'(u_n)(u_n)\nonumber\\
	&=& \dfrac{1}{2}\int_{\mathbb{R}^N}f_0(u_n)u_n(x)\; dx	- \int_{\mathbb{R}^N}F_0(u_n(x))\;dx\nonumber\\
	&=&\dfrac{1}{2}\int_{\mathbb{R}^N}Q_0(u_n(x)) \ dx\nonumber\\
	&\geq& \dfrac{c_0}{2}\int_{\mathbb{R}^N}|u_n(x)|^\theta \;dx\nonumber\\
	&=&\dfrac{c_0}{2}||u_n||^\theta_{L^\theta(\mathbb{R}^N)},
	\end{eqnarray}
	thus, $(u_n)$ is a bounded sequence in $L^\theta(\mathbb{R}^N)$.
	Hence, if $p=\theta$ from (\ref{b2e4}) it implies that $(u_n)$ bounded in $E$. If not, first we consider the case $2<p<\theta$, then there exists $t \in (0,1)$ such that $p = 2t+ (1-t)\theta$, hence applying H\"older Inequality and in view of (\ref{b2e5}) it follows that
	\begin{eqnarray}\label{b2e6}
	||u_n||^p_{L^p(\mathbb{R}^N)}&\leq& ||u_n||^{2t}_{L^2(\mathbb{R}^N)}||u_n||^{(1-t)\theta}_{L^\theta(\mathbb{R}^N)}\nonumber\\
	&\leq&\left(\dfrac{2M}{c_0}\right)^{1-t}C_2^{2t}||u_n||^{2t}.
	\end{eqnarray}
	Substituting (\ref{b2e6}) in (\ref{b2e4}), it yields
	\begin{equation}\label{b2e7}
	\big(1 - 2C_2^2\varepsilon\big)||u_n||^2 \leq C + 2C_\varepsilon\left(\dfrac{2M}{c_0}\right)^{1-t}C_2^{2t}||u_n||^{2t},
	\end{equation}
	and since $2t<2$, choosing $\varepsilon>0$ small enough, it ensures that $(u_n)$ is bounded in $E$. Now, it remains to consider the case $\theta<p<2^*$, then there exists $t\in(0,1)$ such that $p = 2^*t + (1-t)\theta$. Again from H\"older Inequality and using the boundedness in (\ref{b2e5}) it follows that
	
	\begin{eqnarray}\label{b2e8}
	||u_n||^p_{L^p(\mathbb{R}^N)}&\leq& ||u_n||^{2^*t}_{L^{2^*}(\mathbb{R}^N)}||u_n||^{(1-t)\theta}_{L^\theta(\mathbb{R}^N)}\nonumber\\
	&\leq&\left(\dfrac{2M}{c_0}\right)^{1-t}C_{2^*}^{2^*t}||u_n||^{2^*t},
	\end{eqnarray}
	where $C_{2^*}>0$ is the constant given by the embedding $E\hookrightarrow L^{2^*}(\mathbb{R}^N)$. 
	Substituting (\ref{b2e8}) in (\ref{b2e4}), it yields
	\begin{equation}\label{b2e9}
	\big(1 - 2C_2^2\varepsilon\big)||u_n||^2 \leq C + 2C_\varepsilon\left(\dfrac{2M}{c_0}\right)^{1-t}C_{2^*}^{2^*t}||u_n||^{2^*t},
	\end{equation}
	and provided that $\theta >\dfrac{N}{2}(p-2)$, one has $2^*t <2$, hence choosing $\varepsilon>0$ small enough, it ensures that $(u_n)$ is also bounded in $E$ for this case. Therefore, the result holds.
	\end{proof}

\section{Proof of Main Results}

\qquad Previous sections have proved the existence of $(u_n)$ a bounded $(C)_c$ sequence for $I$ in both cases, asymptotically linear and superlinear. Now, we are able to prove our main result. 

\begin{proof}[Proof of Theorem \ref{main}]
Knowing that $(u_n)$ is a bounded $(C)_c$  sequence for $I$, it implies that $u_n \rightharpoonup u$ in $E$ and $u_n \to u$ in $L^2_{loc}(\mathbb{R}^N)$. Given $\varphi \in C^\infty_0(\mathbb{R}^N)$, let $K\subset\mathbb{R}^N$ be the compact support of $\varphi$. Since $(u_n)$ is a $(C)_c$ sequence for $I$, from weak convergence and  Lebesgue Dominated Convergence Theorem, it follows that
\begin{eqnarray}\label{e0}
o_n(1) &=& I'(u_n)\varphi\nonumber\\
 &=& \Big(u_n^+ - u_n^-,\varphi\Big) - \int_{\mathbb{R}^N}f_0(u_n(x))\varphi(x)\;dx\nonumber\\ 
 &=& \Big(u^+ - u^-,\varphi\Big) - \int_{K}f_0(u(x))\varphi(x)\;dx + o_n(1)\nonumber\\
 &=& I'(u)\varphi + o_n(1).
\end{eqnarray}
Thus, $I'(u)\varphi =0$, and since $\varphi$ is arbitrary, by density we obtain $I'(u)\equiv0$ and therefore $u$ is a critical point of $I$. 

\qquad If $u\not=0$, we obtain a nontrivial critical point of $I$, if not, we observe that $u_n \not\to 0$ in $E$, provided that $I(u_n)\to c \not=0$. Furthermore, since $c>0$, we claim $(u_n)$ cannot be a vanishing sequence. In fact, given $\varepsilon>0$ from $(f_1)_0-(f_2)_0$ there exists $C_\varepsilon>0$ such that
\[
\int_{\mathbb{R}^N}|f_0(u(x))u(x)|\;dx \leq \varepsilon||u||^2_{L^2(\mathbb{R}^N)} + C_\varepsilon||u||^p_{L^p(\mathbb{R}^N)},
\]
thus, if $(u_n)$ vanishes, since $\varepsilon$ is arbitrary, we get that 
\begin{equation}\label{nce5}
\displaystyle\int_{\mathbb{R}^N}f_0(u_n(x))\big|u_n^+(x)-u_n^-(x)\big|\;dx \to 0, \quad \text{as} \quad n\to +\infty.
\end{equation}
Since $(u_n)$ is a $(C)_c$ sequence for $I$ and in virtue of (\ref{nce5}) we arrive at
\begin{equation}\label{nce6}
o_n(1) = I'(u_n)\big(u_n^+ - u_n^-\big)  + \int_{\mathbb{R}^N}f_0(u_n(x))\big(u_n^+(x) - u_n^-(x)\big)\;dx = ||u_n||^2,
\end{equation} 
contradicting that $\displaystyle\liminf_{n \to +\infty}||u_n||^2\geq \displaystyle\liminf_{n \to +\infty}||u_n^+||^2\geq \displaystyle\lim_{n \to +\infty}2I(u_n) = c>0$. Therefore $(u_n)$ is a non-vanishing sequence.

\qquad From the equivalence of norms, the boundedness of $(u_n)$ implies the boundedness of $(\tilde{u}_n)$ in (\ref{lbe08}), then we assume $\tilde{u}_n \rightharpoonup \tilde{u}$ in $E$, up to subsequences, and since $(u_n)$ is non-vanishing, it follows that (\ref{lbe7})-(\ref{lbe8}) hold true, with $u_n, \; \tilde{u}_n$ instead of $v_n,\; \tilde{v}_n$ respectively, thus, $\tilde{u}\not= 0$. On the other hand, in view of $(V_1)_0$ we have that $I$ is $ \mathbb{Z}^N$-invariant, up to a translation, hence $I(u_n) = I(\tilde{u}_n)$ {and} $I'(u_n)\equiv I'(\tilde{u}_n),$ therefore $(\tilde{u}_n)$ is also a $(C)_c$ sequence for $I$. Arguing as in (\ref{e0}), but with $(\tilde{u}_n)$ instead of $(u_n)$, we conclude that $\tilde{u}$ is a nontrivial critical point of $I$. Since $I \in C^1(E,\mathbb{R})$ such a critical point is a nontrivial weak solution to problem $(\ref{prob_0})$. Therefore, $\tilde{u}$ is equivalently a nontrivial weak solution to (\ref{prob}), as desired.
\end{proof}
\medskip

\end{document}